\documentclass[12pt]{amsart}
\usepackage{amssymb, amsmath, amsmath}
\usepackage[backref]{hyperref}
\usepackage{mathrsfs}
\usepackage[alphabetic,backrefs,lite]{amsrefs}
\usepackage{amscd}   
\usepackage{fullpage}
\usepackage[all]{xy} 
\usepackage{setspace}

\setstretch{1.06}

\DeclareFontEncoding{OT2}{}{} 
\newcommand{\textcyr}[1]{%
 {\fontencoding{OT2}\fontfamily{wncyr}\fontseries{m}\fontshape{n}\selectfont #1}}
\newcommand{\Sha}{{\mbox{\textcyr{Sh}}}}

\usepackage[usenames,dvipsnames]{color}


\newtheorem{lemma}{Lemma}[section]
\newtheorem{theorem}[lemma]{Theorem}

\newtheorem{prop}[lemma]{Proposition}

\newtheorem{claim*}{Claim}

\newtheorem{example}[lemma]{Example}

\theoremstyle{definition}
\newtheorem{remark}[lemma]{Remark}


\newcommand{\PP}{{\mathbb P}}

\newcommand{\F}{{\mathbb F}}

\newcommand{\Z}{{\mathbb Z}}

\newcommand{\Xbar}{{\overline{X}}}

\newcommand{\Adeles}{{\mathbb A}}


\newcommand{\calO}{{\mathcal O}}



\DeclareMathOperator{\Br}{Br}
\DeclareMathOperator{\Gr}{Gr}

\DeclareMathOperator{\Jac}{Jac}

\DeclareMathOperator{\Spec}{Spec}


\newcommand{\isom}{\simeq}

\numberwithin{equation}{section}
\numberwithin{table}{section}

\newcommand{\defi}[1]{\textsf{#1}} 

\def\PP{\mathbb{P}}


\def\calO{\mathcal{O}}



\DeclareMathOperator{\Mor}{Mor}
\DeclareMathOperator{\Cov}{Cov}
\DeclareMathOperator{\ab}{ab}
\DeclareMathOperator{\isog}{isog}

\usepackage{tikz-cd}

\title{Rational points on symmetric squares of constant algebraic curves over function fields}
\author{Jennifer Berg}
\address{Department of Mathematics, Bucknell University, Lewisburg, PA 17837, USA}
\email{jsb047@bucknell.edu}
\urladdr{http://sites.google.com/view/jenberg}

\author{Jos\'e Felipe Voloch}
\address{School of Mathematics and Statistics, University of Canterbury, Christchurch 8140, New Zealand}
\email{felipe.voloch@canterbury.ac.nz}
\urladdr{https://www.math.canterbury.ac.nz/~f.voloch/}

\begin{document}
\maketitle

\begin{abstract}
We consider smooth projective curves $C/\F$ over a finite field and their symmetric squares $C^{(2)}$. For a global function field $K/\F$, we study the $K$-rational points of $C^{(2)}$. We describe the adelic points of $C^{(2)}$ surviving Frobenius descent and how the $K$-rational points fit there. Our methods also lead to an explicit bound on the number of $K$-rational points of $C^{(2)}$ satisfying an additional condition. Some of our results apply to arbitrary constant subvarieties of abelian varieties, however we produce examples which show that not all of our stronger conclusions extend.
\end{abstract}

\section{Introduction}
Let $C$ be a smooth, geometrically irreducible, proper curve over a finite field $\F$, and let $K/\F$ be a global function field. In this paper, we study the set of $K$-rational points of the symmetric square $X = C^{(2)}$ (or a constant subvariety of an abelian variety) and explore the information that can be obtained about $X(K)$ from knowledge of only certain finite coverings of $X$. 

To provide some context, we briefly recall what is known about the $K$-rational points of a nice curve $C/K$, assumed to be of genus $g \ge 2$, embedded in its Jacobian $J$, and with $\Sha(J)$ finite. In \cite{PV10}*{Theorem 4.4} the Brauer-Manin obstruction is shown to be the only obstruction to weak approximation for curves over $K$, i.e., $\overline{C(K)} = C(\Adeles_K)_{\bullet}^{\Br}$, with $C$ satisfying additional but general hypotheses which exclude the case of constant curves.

However, when $C$ is a nice constant curve (or abelian variety), the Brauer-Manin obstruction is equivalent to descent under finite abelian group schemes, and in particular $C(\Adeles_K)^{\Br} = C(\Adeles_K)^{\ab}= C(\Adeles_K)^{\isog}$ \cite{CVV18}*{\S 2}. Thus in \cite{CV19}, Creutz and Voloch study $C(K)$ via descent under pull-backs of \'etale isogenies on $\Jac(C)$ defined over $\F$ together with Frobenius descent. It is shown that $C(\Adeles_K)^{F^\infty} = C(K) \cup C(\Adeles_{K,\F})$, that is, any adelic point of $C$ unobstructed by descent under the $n^{\text{th}}$ iterate of the $\F$-Frobenius isogeny on $F \colon J \to J$ for all $n \ge 1$ is global, unless it is arbitrarily divisible by Frobenius \cite{CV19}*{Theorem 1.3}. Moreover, writing $K = \F(D)$ for a nice curve $D$ over $\F$, if the genus of $D$ is less than that of $C$, then $C(\Adeles_K)^{\Br} = C(K) = C(\F)$ \cite{CV19}*{Theorem 1.5}.

Less is known about the precise relationship between finite descent obstructions and the Brauer-Manin obstruction for higher dimensional constant subvarieties $X$ of abelian varieties, (see \S\ref{subsec: descentBM} for further remarks). Still, it is reasonable to ask what information about $X(K)$ can be deduced from descent under torsors arising as pullbacks from isogenies on the abelian variety, and specifically from Frobenius descents. Moreover, Frobenius descent is interesting in its own right since it can be viewed in analogy to the differential descent obstruction in characteristic $0$ \cite{Vol}, \cite{ARM}. 

\subsection{Results for symmetric squares of constant curves over function fields} Let $C$ and $D$ be smooth, proper curves over a finite field $\F$ and set $K = \F(D)$. We note that $C$ may be embedded into its Jacobian $J$ since, by the Hasse-Weil bounds, $C$ has a 0-cycle of degree $1$ defined over $\F$. We study the $K$-rational points on the image of the symmetric square $C^{(2)}$ in $J$, parametrizing effective divisors of degree 2 on $C$.

An adelic point on a variety $X$ surviving Frobenius descent defines a class in $H^1(K, \ker(F))$ which can be related to tangent spaces of the corresponding variety $X$, see \S\ref{subsec: gauss}. Thus the setting of symmetric powers of a curve is particularly amenable to studying the information captured by the Frobenius descent obstruction since the local geometry of $X = C^{(2)}$ is well understood. In particular, there is an explicit description of the projectivized tangent space to points of $X$ in terms of secant lines on the canonically embedded curve in $\PP^{g-1}$. This perspective allows us to give a precise description of the points of $C^{(2)}$ surviving Frobenius descent in Theorem \ref{thm: global} in terms of geometric information about $C$.

The proof of Theorem \ref{thm: global} allows us to determine an explicit bound on the number of $K$-rational points of $X$ satisfying certain geometric conditions, namely that the corresponding curves are not everywhere tangent to the ``horizontal''
direction. This is made precise in the discussion immediately preceding Theorem \ref{thm: bound}, which gives the explicit
statement. As far as the authors know, even the corresponding finiteness statement is new.

While a few questions remain, the situation for the symmetric square is fairly clear. In contrast, for arbitrary
constant surfaces (or higher dimensional varieties) contained in abelian varieties, the situation is murkier.
We give constructions in \S\ref{sec: arbitraryX} of non-global points unobstructed by Frobenius descent.

\section{Notation and Background}
Let $\F$ be a finite field of characteristic $p$ and let $D$ be a smooth, proper curve over $\F$. Set $K = \F(D)$. The places of $K$ are in bijection with the set $D^1$ of closed points of $D$. Given a closed point $v \in D^1$, let $K_v, \calO_v,$ and $\F_v$ denote the corresponding completion of $K$, ring of integers, and residue field, respectively. 

Throughout this note we shall let $X$ denote a proper geometrically integral $\F$-variety and $X_K := X \otimes_K \F$. Recall that a variety over $K$ is called \defi{constant} if it is isomorphic to the base change of a variety defined over $\F$ and is called \defi{isotrivial} if it becomes constant over some finite extension of $K$. Note that we can make the identifications \[ X_K(K) = X(K) = \Mor_\F(\Spec(K),X) = \Mor_\F(D,X), \] therefore when we consider a $K$-rational point of $X$, it will be convenient to also keep in mind the existence of a morphism $f \colon D \to X$.

\subsection{Adelic and reduced adelic points} The \defi{adele ring} of $K$ is the $K$-algebra defined by the restricted direct product $\Adeles_K := \prod_{v \in D^1} (K_v: \calO_v)$, where the product runs over the closed points of $D$. Since $X$ is proper, we may identify $X(\Adeles_K) = X_K(\Adeles_K) = \prod_{v \in D^1} X(K_v)$. 

The \defi{reduced adele ring} of $K$ is the $\F$-algebra $\Adeles_{K,\F} = \prod_{v \in D^1} \F_v$, which is an $\F$-subalgebra of $\Adeles_K$. The set $X(\Adeles_{K,\F}) := \Mor_\F(\Spec(\Adeles_{K,\F}), X)$ of reduced adelic points on $X$ is a closed subset of $X(\Adeles_K)$ which can be identified with $\prod_{v \in D^1} X(\F_v)$. When the latter is endowed with the product of the discrete topologies it coincides with the subspace topology determined by $X(\Adeles_{K,\F}) \subset X(\Adeles_K)$. Such points play an important role in the description of various descent obstructions on constant curves and are still necessary to consider for higher dimensional subvarieties of constant abelian varieties.

\subsection{Descent Obstructions} Consider the category $\Cov(X_K)$ of $X_K$ torsors under finite group schemes $G$ over $K$; for a detailed account, see \cite{Sto07}*{Section 4}. We say that an adelic point $P \in X_K(\Adeles_K) = X(\Adeles_K)$ is \defi{unobstructed by}, or \defi{survives}, the torsor $(X',G) \in \Cov(X_K)$ if it lifts to an adelic point on some twist of $(X',G)$. Equivalently, $P$ survives $(X',G)$ if the element of $\prod_{v} H^1(K_v, G)$ given by evaluating $(X',G)$ at $P$ lies in the image of $H^1(K, G)$ under the usual diagonal map in bottom row of the following diagram.

\begin{center}
\begin{tikzcd} X(K) \ar{r} \ar{d} & H^1(K,G) \ar{d} \\
X(\Adeles_K) \ar{r} &  \prod \limits_v H^1(K_v,G)
\end{tikzcd}
\end{center}

Throughout this note, we restrict our attention to subvarieties $X$ of an abelian variety $A/\F$. Furthermore, we will consider the subset of torsors in $\Cov(X_K)$ which arise as pullbacks of isogenies $\phi \colon A' \to A$ defined over $\F$. 

The Frobenius isogeny on $A$ will be of particular importance in subsequent sections. Since $A$ is defined over $\F$, recall that there exists an $\F$-Frobenius morphism $A \to A$ defined by the $n$-fold composition of $\F_p$-Frobenius morphism with itself. Since we will be interested in isogenies with target $A$, it is convenient to consider the $\F_p$-Frobenius morphism denoted $F \colon A^{(-1)} \to A^{(p^n)} \isom A$, given by raising coordinates to their $p$th powers and, Zariski locally, defining equations for $A$ are obtained from those of $A^{(-1)}$ by taking $p$th powers. 

Specifically, we will consider the torsor $(X', \ker(F))$ under the finite abelian $\F$-group scheme $G = \ker(F)$. Recall that for a separable extension $L/K$, the long exact sequence in cohomology arising from the short exact sequence \[ 0 \to \ker(F) \to A^{(-1)} \xrightarrow{F} A \to 0\] gives rise to a homomorphism $\mu: A(L)/F(A^{(-1)}(L)) \to H^1(L,\ker F)$. Thus, to say that an adelic point 
$(x_v) \in X(\Adeles_K)$ survives $F$-descent means there exists a
global element in $H^1(K,\ker F)$ such that its image in $H^1(K_v,\ker F)$
coincides with the image of $x_v$ under $\mu$. 
To discern further information about adelic points unobstructed by Frobenius descent, we make use of the following lemma, originally due to Artin and Milne, \cite{AM}, which describes $\mu$ explicitly.

\begin{lemma}{\cite{CV19}*{Lemma 3.1}} There is a functorial (on separable extensions $L/K$) injection $\mu_L \colon H^1(L,\ker F) \to \Omega_{L/\F}^{\oplus g}$ such that the induced map $A(L)/F(A^{(-1)}(L) \to \Omega_{L/\F}^{\oplus g}$ is given by $x \mapsto x^\ast(\omega_1, \dots, \omega_g)$ where $\omega_1, \dots, \omega_g$ is a basis of holomorphic differentials on $A$.
\end{lemma}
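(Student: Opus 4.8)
The plan is to extract everything from the long exact sequence in fppf cohomology attached to the Frobenius isogeny and to build the map to differentials out of the invariant differentials $\omega_1,\dots,\omega_g$. Write $N=\ker F$, a finite infinitesimal commutative $\F$-group scheme annihilated by Frobenius (height one). Over the field $L$ the sequence $0\to N\to A^{(-1)}\xrightarrow{F}A\to 0$ yields
\[
0\to N(L)\to A^{(-1)}(L)\xrightarrow{F}A(L)\xrightarrow{\delta}H^1(L,N)\to H^1(L,A^{(-1)}),
\]
and since $N$ is infinitesimal and $L$ is reduced we have $N(L)=0$; hence $\delta$ identifies $A(L)/F(A^{(-1)}(L))$ with the kernel of $H^1(L,N)\to H^1(L,A^{(-1)})$. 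So the first task is to produce a functorial injection $\mu_L\colon H^1(L,N)\to\Omega^1_{L/\F}\otimes_\F\omega_A$, where $\omega_A=H^0(A,\Omega^1_{A/\F})$ is the $g$-dimensional cotangent space at the identity spanned by the $\omega_i$; choosing that basis identifies the target with $(\Omega^1_{L/\F})^{\oplus g}$.

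First I would pin down the composite $\mu_L\circ\delta$ by hand, since this is where the differential-pullback formula comes from. A point $x\in A(L)$ is a morphism $x\colon\Spec L\to A$, and pulling back the invariant differentials gives $(x^\ast\omega_1,\dots,x^\ast\omega_g)\in(\Omega^1_{L/\F})^{\oplus g}$. This assignment kills the image of $F$: if $x=F(y)$ then $x^\ast\omega_i=y^\ast(F^\ast\omega_i)=0$, because the relative Frobenius annihilates Kähler differentials. Thus $x\mapsto(x^\ast\omega_i)_i$ already descends to $A(L)/F(A^{(-1)}(L))$, and the content of the lemma on this subquotient is precisely that it factors as $\delta$ followed by the sought-for $\mu_L$.

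To define $\mu_L$ on all of $H^1(L,N)$ and to prove injectivity, I would argue by dévissage along a composition series of $N$ whose graded pieces are simple height-one group schemes, i.e.\ forms of $\alpha_p$ and $\mu_p$. For a $\mu_p$-piece one has $H^1(L,\mu_p)=L^\times/(L^\times)^p$ with $\mu_L$ the logarithmic derivative $a\mapsto da/a=a^\ast(dt/t)$; for an $\alpha_p$-piece one has $H^1(L,\alpha_p)=L/L^p$ with $\mu_L$ the differential $a\mapsto da=a^\ast(dt)$. In both cases $\mu_L$ is injective precisely because $\ker\bigl(d\colon L\to\Omega^1_{L/\F}\bigr)=L^p$; this holds since $\F$ is perfect and $L/\F$ is therefore separable, so that a separating transcendence basis of $L/\F$ trivializes $d$ modulo $p$-th powers. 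These two computations are exactly the classical Kummer and Artin--Schreier descriptions, and they explain why the invariant-differential formulation is the correct one: it packages $d\log$ on the multiplicative part and $d$ on the additive part into the single rule $x\mapsto x^\ast\omega_i$. Assembling the pieces by the five lemma applied to the filtration, and checking functoriality in $L$ (clear from naturality of the connecting maps and of $d$), produces $\mu_L$ together with its injectivity.

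The main obstacle is the global step: controlling the extension structure of $N$ over the non-perfect field $L$ and verifying that the differential maps on the graded pieces splice to a single injective $\mu_L$ on $H^1(L,N)$, rather than merely to an injection on the associated graded. Concretely one must ensure that the forms of $\mu_p$ that occur (classified by characters of $\Gal(\Fbar/\F)$, hence possibly nontrivial) still map injectively via $d\log$, and that no cancellation occurs between successive layers; this is the technical heart of the Artin--Milne computation, which I would import as the structural input. Granting it, the compatibility with $\delta$---and hence the differential-pullback formula $x\mapsto x^\ast(\omega_1,\dots,\omega_g)$---follows from the local model computations above.
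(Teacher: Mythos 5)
Your peripheral steps are correct and match the standard picture: $N=\ker F$ is infinitesimal so $N(L)=0$ and the connecting map $\delta$ identifies $A(L)/F(A^{(-1)}(L))$ with a subgroup of $H^1(L,N)$; $F^\ast$ kills K\"ahler differentials, so $x\mapsto x^\ast(\omega_1,\dots,\omega_g)$ factors through the cokernel of $F$; and the Kummer/Artin--Schreier identifications $H^1(L,\mu_p)=L^\times/(L^\times)^p$ with $d\log$ and $H^1(L,\alpha_p)=L/L^p$ with $d$, injective because $\ker\bigl(d\colon L\to\Omega^1_{L/\F}\bigr)=L^p$ over the perfect field $\F$, are all right. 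But the central step --- defining $\mu_L$ on all of $H^1(L,N)$ --- is missing. The five lemma cannot ``produce'' $\mu_L$: it verifies properties of maps you already have. A composition series $0\to N'\to N\to N''\to 0$ gives $0\to H^1(L,N')\to H^1(L,N)\to H^1(L,N'')$ (using $H^0(L,N'')=0$), but there is no canonical way to splice injections on the outer terms into a homomorphism on the middle term: the extension of cohomology groups need not split, any ad hoc choice would a priori depend on the composition series, and the functoriality in $L$ demanded by the lemma (and needed to compare with $\delta$ and verify the pullback formula) would have to be established for that choice. You acknowledge this and propose to ``import the technical heart of the Artin--Milne computation'' --- but since the lemma \emph{is} the Artin--Milne result (the paper itself gives no proof and quotes it from \cite{CV19}*{Lemma 3.1}, where it is attributed to \cite{AM}), as written the argument assumes precisely what is to be proven.

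Two further points. First, the actual construction avoids d\'evissage: $\ker F$ has height one, so every torsor under it is split by the purely inseparable extension $L^{1/p}$, and Artin--Milne use this to embed $H^1_{\fppf}(L,\ker F)$ directly into Lie-algebra-valued differentials (the co-Lie algebra of $N$ is $g$-dimensional, giving the target $\Omega_{L/\F}^{\oplus g}$ after the choice of basis), with the formula on $A(L)$ then checked by a local computation at the origin; your $\mu_p$ and $\alpha_p$ computations are the rank-$p$ special cases of this, not ingredients from which the general case is assembled. Second, a minor inaccuracy in your reduction itself: over the non-algebraically-closed field $\F$, the simple constituents of $\ker F$ need not be forms of $\alpha_p$ or $\mu_p$ of order $p$; the multiplicative-type simples can be geometrically isomorphic to $\mu_p^n$ with a nontrivial Galois twist, and the splitting field may have degree divisible by $p$, so a restriction--corestriction repair is not automatic. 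This is secondary to the main gap, but it means even the first reduction needs more care than stated.
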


We note that if $L/K$ is a finite separable extension, then $\Omega_{L/\F}$
is a one-dimensional vector space over $L$ and we can then identify the
projective space $\PP(\Omega_{L/\F}^{\oplus g})$ with $\PP^{g-1}(L)$.

\section{Frobenius Descent Obstruction}
In this section, we consider subvarieties $X$ of constant abelian varieties $A$ containing adelic points $(x_v) \in X(\Adeles_K)$ unobstructed by Frobenius descent. When $X$ is a smooth, proper, geometrically integral constant curve over $K$ embedded in its Jacobian, Creutz and the second author prove that any adelic point on $X$ nontrivially surviving a torsor $(X', \ker(F))$ must be global, where $F$ denotes the $\F_p$-Frobenius isogeny on $\Jac(X)$ \cite{CV19}*{Lemma 3.2}. We show that this need not be true for higher dimensional subvarieties. To exhibit such counterexamples, we must first consider a modified Gauss map for subvarieties of abelian varieties.

\subsection{The Gauss map} \label{subsec: gauss} Suppose that $X$ is a smooth subvariety of an abelian variety $A$ of dimension $g$. The Frobenius descent map, denoted by $\mu$ in \cite{CV19}*{Lemma 3.1}, assigns a tangent vector to a map $D \to A$, i.e., to a point in $A(K)$. As before, to say that a point $(x_v) \in X(\Adeles_K)$ survives Frobenius descent means that there exists an
element $\xi$ of $\Omega_{K/\F}^{\oplus g}$ 
such that for each place $v$, one has $\mu_{K_v}(x_v) = \xi$ for all $v$. In this case, if $\xi \ne 0$ we let $\gamma \colon D \to \PP^{g-1}$ denote the map corresponding to the point $[\xi] \in \PP^{g-1}(K)$. 

If $X$ has finite stabilizer in $A$, Abramovich proved that the Gauss map $X \to \Gr(\dim X, \dim A)$ is finite \cite{Abr94}*{Theorem 4}. In this case, the Gauss map is the usual one, defined by sending a smooth point $x \in X$ to the point on the Grassmannian representing the tangent space of $X$ at $x$ inside of the tangent space to $A$. This result holds in arbitrary characteristic. However, in the present context, we are impelled to use a variant of the Gauss map $h$ which has target $\PP^{g-1}$ in order to meaningfully compare with the Frobenius descent map $\mu$. As a result, degeneracies of $h$ arise in positive characteristic that are not present for the usual Gauss map. These lead to interesting examples, see \S\ref{subsec: degen}.

In order to define $h$, we let $P$ be the projectivized tangent bundle of $X$, the variety with a map $P \to X$ such that the fiber at $x$ in $X$ is the set of 1-dimensional subspaces of $T_xX$. Note that $\dim P = 2 \dim X - 1$. By translating such subspaces to the tangent space to $A$ at the origin, we obtain a map $h \colon P \to \PP^{g-1} = \PP(T_0 A)$. If $A$ is the Jacobian of a non-hyperelliptic curve $C$, we identify the projective space associated to the tangent space of $A$ at the origin with the ambient $\PP^{g-1}$ in which $C$ is canonically embedded. Each $x_v \in X(K_v)$ gives a map $\Spec(O_v) \to X$,  which then lifts to a map $\Spec(O_v) \to P$. That is, from an adelic point of $X$ one obtains an adelic point on $P$, and hence an adelic point on $\PP^{g-1}$ by composing with the map $h$ above for each place $v$ of $K$.

\begin{remark} In the case when $X = C$ is a non-hyperelliptic curve with an embedding into its Jacobian, the map $h$ coincides with the canonical embedding $\phi \colon C \to \PP^{g-1}$. The existence of a point $(x_v) \in C(\Adeles_K)$ unobstructed by Frobenius descent gives that $\gamma(D) = C$. When $X$ is a surface or higher dimensional variety, we shall see that it is often less straightforward to relate $\gamma(D)$ to the image of $h$ in $\PP^{g-1}$.
\end{remark}

\subsection{Frobenius descent on the symmetric square of a curve} In the context of higher dimensional subvarieties of constant abelian varieties, a reasonable setting in which to examine the Frobenius descent obstruction is the image of symmetric powers of a curve in its Jacobian. Let $C$ be a smooth non-hyperelliptic curve of genus $g>3$, and let $X$ be the image in $J := \Jac(C)$ of $C^{(2)},$ parametrizing effective divisors of degree $2$ on $C$. 

As above, let $P$ be the projectivized tangent bundle of $X$. The Riemann-Kempf Singularity Theorem \cite{Kempf},\cite{GH94}*{\S 2.7} relates the local geometry of $X$, specifically the tangent cones at various points, to the corresponding geometry of the linear systems on the canonical curve of $C$ in $\PP^{g-1}$. 
In particular, the projectivized tangent space to $C^{(2)}$ at $P_0 + Q_0$ is identified with the linear span of $P_0$ and $Q_0$ in $\PP^{g-1}$, i.e.,\ the secant line between $P_0$ and $Q_0$ on the canonical curve. Thus, we have a map of threefolds, $h \colon P \to S \subset \PP^{g-1}$ from $P$ to the secant variety $S$ of the canonically embedded curve. Note that this is the same map $h$ as above, but with the target restricted to $S$. 

We will show below in Lemma \ref{lem: sec} that the map $h$ is birational, hence there exists an open 
set $U \subset S$ where $h$ has an inverse $h^{-1}$. The set $U$ can be interpreted as the set of points $R$ in $S$ for which there exists a \emph{unique} pair $\{Q,Q'\}$ of distinct points of $C$ with $R \in \overline{QQ'}$, the secant line between $Q$ and $Q'$. The map $h$ fails to be bijective at the locus of points where two secant lines intersect. Observe that after fixing a point $P_0 \in C$ and varying another point $Q$, the line $\overline{P_0Q}$ passes through $P_0$, hence $h^{-1}(P_0)$ is a curve. By varying $P_0$, we see that $h^{-1}(C)$ is a surface in $P$. Moreover $C \cap U = \emptyset$.

\begin{lemma} \label{lem: sec} Let $C$ be a smooth non-hyperelliptic curve of genus $g>3$.
Suppose that $P_1, P_2, P_3, P_4$ are distinct points on $C$. If 
two secant lines between disjoint pairs of the images of these points on the canonical curve in $\PP^{g-1}$ intersect
then $P_1 + P_2 +  P_3 + P_4$ is a $g_4^1$. Moreover, if $g>4$, the map $h$ above is birational and if, in addition,
$C$ has no $g_4^1$, $h$ has an inverse in $S \setminus C$.
\end{lemma}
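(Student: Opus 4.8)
The plan is to transport everything to the canonical model via the Geometric Riemann--Roch theorem: for an effective divisor $E$ of degree $d$ on $C$, the projective span $\overline{\phi(E)} \subset \PP^{g-1}$ has dimension $\deg E - 1 - \dim|E|$. Since $C$ is non-hyperelliptic, $\phi$ is an embedding, so $\phi(P_1),\dots,\phi(P_4)$ are four distinct points. For the first assertion, if two secant lines between disjoint pairs, say $\overline{\phi(P_1)\phi(P_2)}$ and $\overline{\phi(P_3)\phi(P_4)}$, meet, then these two lines are coplanar and the four points span a subspace of dimension at most $2$. Putting $E = P_1 + P_2 + P_3 + P_4$, Geometric Riemann--Roch gives $\dim|E| = 3 - \dim\overline{\phi(E)} \ge 1$, so $E$ moves in a $g_4^1$.

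For birationality I would first record that $\dim P = 3$ and that, since the canonical curve is non-degenerate in $\PP^{g-1}$ with $g-1 \ge 4$, its secant variety $S = \Sec(C)$ has dimension $3$ (curves are never secant-defective outside $\PP^2$); hence the dominant map $h \colon P \to S$ is generically finite, and is birational exactly when a general point of $S$ lies on a single secant. I would then bound the locus $\Sigma \subseteq S$ of points lying on two or more (generalized, i.e. including tangent) secants, aiming to show $\dim \Sigma \le 2$. The fiber $h^{-1}(R)$ is the set of secant and tangent lines through $R$; for $R \notin C$, two distinct such lines either (a) have disjoint support, whence by the first assertion the associated degree-$4$ divisor is a $g_4^1$ (the non-reduced cases, where a tangent line is involved, follow from the same span computation applied to divisors such as $2P + Q + Q'$), or (b) share an endpoint, in which case the two lines coincide and $R$ lies on a trisecant, equivalently $C$ carries a $g_3^1$.

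The crux is a dimension estimate valid for \emph{every} non-hyperelliptic $C$, and here I would invoke Martens' theorem: for non-hyperelliptic $C$ of genus $g \ge 5$ one has $\dim W^1_4(C) \le 1$ and $\dim W^1_3(C) \le 0$. Each $g_4^1$ carries a $\le 1$-dimensional family of divisors, and each such divisor contributes only the finitely many diagonal points of the complete quadrilateral spanned by its four coplanar images; thus the type-(a) contribution to $\Sigma$ has dimension $\le 2$. The finitely many $g_3^1$'s sweep out trisecant surfaces of dimension $2$, bounding the type-(b) contribution. Hence $\dim \Sigma \le 2 < 3$, a general point of $S$ lies on a unique secant, and $h$ is birational. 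I expect this uniform bound to be the main obstacle, since the naive Brill--Noether count $\rho = 6 - g$ only governs the general curve, and one must control special curves (e.g. trigonal ones) that carry positive-dimensional families of $g_4^1$'s; Martens' theorem is precisely what rules these out.

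Finally, for the inverse on $S \setminus C$, suppose $C$ has no $g_4^1$. Then $C$ has no $g_3^1$ either, as a $g_3^1$ together with any point would produce a $g_4^1$; equivalently, the canonical model has no trisecant. Given $R \in S \setminus C$ on two distinct secants, case (a) is excluded by the first assertion and case (b) is excluded by the absence of trisecants. Therefore every point of $S \setminus C$ lies on a unique secant, and since the projectivized tangent space maps isomorphically onto each secant line, $h$ is injective over $S \setminus C$ and hence admits an inverse there.
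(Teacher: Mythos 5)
Your proposal is correct and follows essentially the same route as the paper: the first assertion via the geometric Riemann--Roch theorem applied to the span of $P_1+P_2+P_3+P_4$, and birationality via Martens' theorem bounding $\dim W^1_4$ (the paper's proof is just a terser version of your dimension count, leaving the trisecant/$g^1_3$ and tangent-line cases, and the fact that $\dim S = 3$, implicit where you spell them out). No changes needed.
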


\begin{proof}
Suppose that the lines $\overline{P_1 P_2}$ and $\overline{P_3 P_4}$ intersect at a point $R$ in $\PP^{g-1}$. Then the geometric form of Riemann Roch \cite{ACGH85}*{p.\ 12} implies that the divisor $P_1 + P_2 + P_3 + P_4$ on $C$ is a $g_4^1$. This proves the first part of the lemma.

By Martens's theorem (\cite{Martens}, \cite[Theorem IV.5.1]{ACGH85})
under our assumptions, the set of $g_4^1$'s of $C$ is at most one dimensional,
so the set $U$ above is open and dense in $S$ and $h$ is invertible on $U$, hence birational. Finally, if 
$C$ has no $g_4^1$ then $U = S \setminus C$.
\end{proof}

\begin{remark}
A more general statement that implies that $h$ is birational as in Lemma \ref{lem: sec} is  
\cite[Exercise VIII.A.2]{ACGH85}
\end{remark}

We shall give conditions on $C$ that determine when an adelic point of $X$ surviving Frobenius descent is a global point.

\begin{lemma} \label{lem: meetU} Let $C$ be a smooth non-hyperelliptic curve of genus $g$ 
and let $X$ be the image of $C^{(2)}$ in $J = \Jac(C)$. Let $(x_v) \in X(\Adeles_K)$ be an adelic point with $\mu_{K_v}(x_v) \ne 0$ for some $v$ which survives Frobenius descent, and write $\mu(x_v) = \xi$ for some $\xi \in H^1(K,\ker F)$. Let $\gamma \colon D \to \PP^{g-1}$ be the map corresponding to $[\mu_{K_v}(x_v)] = [\xi] \in \PP^{g-1}(K)$. If $\gamma(D) \cap U \ne \emptyset$, then $(x_v) \in X(K)$. 
\end{lemma}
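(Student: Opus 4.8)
The plan is to reconstruct a global morphism $f\colon D \to X$ by inverting the birational Gauss map $h$ along $\gamma$, and then to show that $f$, viewed as an adelic point, coincides with $(x_v)$. Write $\pi\colon P \to X$ for the projection from the projectivized tangent bundle. Recall from \S\ref{subsec: gauss} that each local point $x_v\colon \Spec\calO_v \to X$ lifts canonically (using its derivative, which is nonzero at the generic point because $\mu_{K_v}(x_v)=\xi\neq 0$) to $\widetilde{x}_v\colon \Spec\calO_v \to P$, and that composing with $h$ computes the projectivized Frobenius descent class, i.e.\ $h\circ\widetilde{x}_v = [\mu_{K_v}(x_v)]$ as maps into $\PP^{g-1}$. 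Since $(x_v)$ survives Frobenius descent with class $\xi$, we have $\mu_{K_v}(x_v)=\xi$ in $\Omega_{K_v/\F}^{\oplus g}$ for every $v$, so $h\circ\widetilde{x}_v$ equals the local restriction of $\gamma$ at each $v$; in particular each such restriction factors through the secant variety $S$, whence $\gamma(D)\subseteq S$.

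First I would extract the global point. Because $U\subseteq S$ is open and $\gamma$ is a morphism, $\gamma^{-1}(U)$ is open in $D$, and the hypothesis $\gamma(D)\cap U\neq\emptyset$ makes it nonempty, hence dense since $D$ is irreducible; thus it contains the generic point $\eta=\Spec K$, and $\gamma(\eta)\in U$. On $\gamma^{-1}(U)$ the inverse $h^{-1}$ is defined, so $\widetilde{f}:=h^{-1}\circ\gamma$ is a morphism to $P$ there, and $f:=\pi\circ\widetilde{f}$ is a rational map $D\dashrightarrow X$ defined on a dense open set. Since $D$ is a smooth projective curve and $X$ is proper, $f$ extends to a morphism $f\colon D\to X$, that is, a point $f\in X(K)$. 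By construction $h\circ\widetilde{f}=\gamma$ on $\gamma^{-1}(U)$, and as both sides are morphisms into the separated scheme $\PP^{g-1}$ agreeing on a dense open set, $h\circ\widetilde{f}=\gamma$ on all of $D$.

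Finally I would identify $f$ with $(x_v)$ place by place. Fix a place $v$ and restrict everything to the generic point $\Spec K_v$ of $\Spec\calO_v$, which maps to $\eta$; since $\gamma(\eta)\in U$, the restriction of $\gamma$ to $\Spec K_v$ maps into $U$. Both $h\circ\widetilde{x}_v$ and $h\circ\widetilde{f}_v$ restrict to this same map on $\Spec K_v$ and land in $U$, where $h$ is injective; therefore $\widetilde{x}_v=\widetilde{f}_v$ on $\Spec K_v$, and applying $\pi$ gives $x_v=f_v$ as $K_v$-points. As this holds for every $v$, the adelic point $(x_v)$ is the image of $f\in X(K)$, so $(x_v)\in X(K)$.

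The main obstacle is the place-by-place comparison in the last step: a priori $\gamma$ could meet $S\setminus U$ at some closed points of $D$, where $h$ is not invertible and the naive reconstruction of $x_v$ fails. The observation that resolves this is that the comparison only needs to be carried out at the generic point $\Spec K_v$ of each $\Spec\calO_v$, which always maps to the generic point $\eta$ of $D$; as soon as $\gamma(D)\cap U\neq\emptyset$ we have $\eta\in\gamma^{-1}(U)$, so every local comparison takes place inside the locus $U$ on which $h$ is an isomorphism onto its image, bypassing the bad closed fibers. (The genus hypotheses of Lemma~\ref{lem: sec} are what guarantee that $U$ is nonempty and that $h^{-1}$ exists on it; here $U$ enters only through the standing hypothesis $\gamma(D)\cap U\neq\emptyset$.)
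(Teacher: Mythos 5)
Your proof is correct and follows essentially the same route as the paper's: both lift $(x_v)$ to the projectivized tangent bundle, deduce $\gamma(D) \subset S$ from survival of Frobenius descent, form the global point $\pi \circ h^{-1} \circ \gamma \in X(K)$, and use the uniqueness condition defining $U$ (i.e.\ injectivity of $h$ over $U$) to conclude that this global point is $(x_v)$ itself. Your place-by-place comparison at the generic point $\Spec K_v$ simply makes explicit what the paper compresses into the remark that the uniqueness condition guarantees $h^{-1}$ is well defined.
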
 

\begin{proof}
Since any adelic point $(x_v) \in X(\Adeles_K)$ lifts to an adelic point of $P$, by composing with the map $h \colon P \to S \subset \PP^{g-1}$, we obtain an adelic point of $S$. By assumption we have $\mu(x_v) = [\xi]$, so this implies that $\gamma(D) \subset S$. 

If $\gamma(D) \subset S$ meets $U$, then we can form the map $\pi \circ h^{-1} \circ \gamma \colon D \to X$, where $\pi \colon P \to X$ is the projection. So $\gamma$ comes from a point of $X(K)$.  Indeed, this follows from the uniqueness condition in the description of $U$ given above, which guarantees that $h^{-1}$ is well defined.
\end{proof}

If we further assume $C$ has no $g^1_3$, then the condition that $\gamma(D)$ meets $U$ is enough to conclude that the adelic point is global. Together with this observation, we give the following description of adelic points surviving Frobenius descent.

\begin{theorem} \label{thm: global} Let $C$ be a smooth curve of genus $g$ and assume that $C$ has no $g^1_2$, $g^1_3$, nor $g^1_4$. Let $Z \subset X(\Adeles_K)$ be the subset of adelic points represented by degree 2 effective divisors on $C$ of the form $y + (Q_v)$ where $y \in C(K)$ and $(Q_v) \in C(\Adeles_{K,\F})$. Then the subset of adelic points of $X$ unobstructed by Frobenius descent is \[ X(\Adeles_K)^{F^\infty} = X(K) \cup X(\Adeles_{K,\F}) \cup Z.\]
\end{theorem}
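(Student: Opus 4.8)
The plan is to run a dichotomy on the single global class produced by the first Frobenius descent, treating the two surviving ``types'' separately, and then to verify that each of the three announced pieces survives all iterates. Suppose $(x_v)\in X(\Adeles_K)^{F^\infty}$; in particular it survives the first descent, so the Artin--Milne lemma (\cite{CV19}*{Lemma 3.1}) furnishes a global $\xi\in H^1(K,\ker F)\subset\Omega_{K/\F}^{\oplus g}$ with $\mu_{K_v}(x_v)=\xi$ for all $v$. Writing the degree-$2$ divisor $x_v$ locally as $P_v+Q_v$ (after a quadratic base change if it is a single closed point) and using that the sum map $C\times C\to J$ pulls back invariant differentials additively, one gets, up to the common factor $du$ and a choice of affine representatives, $\mu_{K_v}(x_v)=c_{P_v}\phi(P_v)+c_{Q_v}\phi(Q_v)$, where $c_{P_v},c_{Q_v}$ are the derivatives of $P_v,Q_v$ along $D$ and $\phi\colon C\to\PP^{g-1}$ is the canonical embedding; projectivizing recovers the secant-line description of $h$. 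Everything turns on whether $\xi=0$.

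If $\xi\neq 0$, then $[\xi]$ defines $\gamma\colon D\to\PP^{g-1}$, and exactly as in the proof of Lemma~\ref{lem: meetU} (lift $(x_v)$ to the projectivized tangent bundle $P$ and compose with $h$) one gets $\gamma(D)\subseteq S$. I would then split further. If $\gamma(D)\not\subseteq C$, then since the hypotheses force $g>4$ and Lemma~\ref{lem: sec} gives $U=S\setminus C$, the image $\gamma(D)$ meets $U$ and Lemma~\ref{lem: meetU} yields $(x_v)\in X(K)$. If instead $\gamma(D)\subseteq C$, the embedding $\phi$ lets $\gamma$ factor through $C$, producing a point $y\in C(K)$. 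Place by place, $[\mu_{K_v}(x_v)]$ then lies both on the secant line $\overline{\phi(P_v)\phi(Q_v)}$ and on $C$; since $C$ has no $g^1_3$ this line meets $C$ only at its endpoints, so $y|_{K_v}\in\{P_v,Q_v\}$ and the complementary speed is forced to vanish. Hence $x_v=y+Q_v$ with $Q_v$ divisible by Frobenius, and comparing the actual differential rather than merely its class shows $\xi=y^\ast(\omega_1,\dots,\omega_g)$; surviving every further $F^n$ then drives the coordinates of $Q_v$ into $\bigcap_n K_v^{p^n}=\F_v$, so $(Q_v)\in C(\Adeles_{K,\F})$ and $(x_v)\in Z$.

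If $\xi=0$, then $\mu_{K_v}(x_v)=0$, i.e. $c_{P_v}\phi(P_v)+c_{Q_v}\phi(Q_v)=0$; off the diagonal $\phi(P_v),\phi(Q_v)$ are independent, forcing $c_{P_v}=c_{Q_v}=0$, so both points have vanishing derivative. As in \cite{CV19}, vanishing derivative means divisibility by Frobenius once, and surviving $F^\infty$ iterates this until the coordinates lie in $\bigcap_n K_v^{p^n}=\F_v$; thus both points are constant and $(x_v)\in X(\Adeles_{K,\F})$. For the reverse inclusions: $X(K)$ survives every descent; $X(\Adeles_{K,\F})$ consists of constant, hence infinitely Frobenius-divisible, points and survives by \cite{CV19}; and for $(x_v)=(y+Q_v)\in Z$, writing $\mu^{(n)}$ for the map attached to $F^n$, additivity gives $\mu^{(n)}_{K_v}(y+Q_v)=\mu^{(n)}_{K_v}(y)+\mu^{(n)}_{K_v}(Q_v)$, whose first term matches the global class of $y\in C(K)$ while the second vanishes since $Q_v$ is constant, so $(x_v)$ survives $F^n$ for all $n$.

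The main obstacle is the diagonal locus $P_v=Q_v$ and its interaction with a single global $\xi$. When $x_v=2\,y|_{K_v}$ the two canonical images coincide and the ``complementary speed vanishes'' step breaks down; here one must instead observe that a single global differential cannot simultaneously match $y^\ast(\omega_1,\dots,\omega_g)$ at some places and $2\,y^\ast(\omega_1,\dots,\omega_g)$ at others, so the fully diagonal case is uniform and gives $x_v=2y\in X(K)$, while the mixed possibilities are excluded. Characteristic $2$ is genuinely exceptional, since $2\,y^\ast(\omega_1,\dots,\omega_g)=0$ throws diagonal points into the $\xi=0$ analysis where the ``both speeds vanish'' deduction fails; these must be reduced by hand to the single-point statement \cite{CV19}*{Lemma 3.2}. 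Beyond this, the delicate point throughout is promoting the place-by-place conclusions ``derivative zero'' and ``Frobenius-divisible'' to genuine global constancy rather than mere finite divisibility, which is exactly where the $F^\infty$ hypothesis and the no-$g^1_3$ (no-trisecant) geometry do the essential work.
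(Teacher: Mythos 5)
Your proposal follows the paper's proof in all essentials: the same dichotomy on the global class $\xi$, the same use of Lemmas \ref{lem: sec} and \ref{lem: meetU} when $\gamma(D)\neq C$, the same no-$g^1_3$ (no trisecant) argument producing $y\in C(K)$ when $\gamma(D)=C$, the same iterated Frobenius-divisibility argument landing in $X(\Adeles_{K,\F})$, and the same easy reverse inclusions. The one real difference is presentational rather than structural: where you compute with explicit tangent ``speeds'' $c_{P_v},c_{Q_v}$ (which is what forces you to handle the diagonal locus and characteristic $2$ by hand, and your reduction of the char-$2$ diagonal to \cite{CV19}*{Lemma 3.2} is left under-specified), the paper instead passes to the residual adelic point $(Q_v')$ of $C$, whose $\mu$-value is the single global class $\xi-\mu(Q)$, and uses the cohomological lifting step via $\Sha(K,\ker F)=0$ for the $\xi=0$ case, which absorbs those degenerate cases uniformly in all characteristics.
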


\begin{proof} Let $(x_v) \in X(\Adeles_K)^{F^\infty}$ so that in particular, it is an adelic point surviving $(X', \ker F)$. Suppose first that $\mu_{K_v}(x_v) \ne 0$ for some $v$. As before, we write $\mu(x_v) = \xi$ and let $\gamma \colon D \to \PP^{g-1}$ be the corresponding map. 

If $\gamma(D) \ne C$, then by Lemma \ref{lem: sec}, since $C$ contains no $g^1_4$, we have $\gamma(D) \cap U \ne \emptyset$. Thus by Lemma \ref{lem: meetU}, we find $(x_v) \in X(K)$. If instead $\gamma(D) = C$ so that $\gamma(D)$ is disjoint from $U$, we claim that $(x_v) \in Z$.  Since $\gamma(D) = C$, it in fact defines a point $Q$ of $C(K)$. Therefore, if we write $(x_v) = (Q_v + Q_v')$, then the observation that $\mu_{K_v}(x_v) = \mu_K(y) \ne 0$ tells us that $Q$ is contained in the secant line $\overline{Q_v Q_v'}$.
Since $C$ has no $g^1_3$, $C$ does not have any trisecant lines and hence no point of $C$ is on a secant line through two other points of $C$. Thus, we recover from $y$ a point $Q$ of $C(K)$ which occurs in $\{Q_v,Q'_v\}$
for all $v$. One possibility is that $Q_v = Q_v' = Q$ for all $v$ and the point $Q_v+Q_v'$ of $X$ is global. Let us fix $Q_v = Q$
and let $Q_v'$ be the other point, which may or may not be $Q$. Then we have an adelic point $(Q_v')$ of $C$ with a global value of
$\mu$. It is therefore either global itself (hence is $Q$), or is in the image of Frobenius (when $\mu(Q_v')= 0$) by \cite{CV19}. 
In the latter case, we can write $Q_v' = F^n(R_v)$ for some $n \ge 1$ and $(R_v) \in C(\Adeles_{K})$. 
Since we assume that $(x_v) \in X(\Adeles_K)^{F^\infty}$, we get that $(R_v) \in C(\Adeles_{K})^F$ so by \cite{CV19},
either $(R_v)$ is global and, in turn, so are $Q_v', x_v$ or $(R_v)$ is in the image of Frobenius. This shows that
either $x_v$ is global or in $Z$, by \cite{CV19}*{Theorem 1.2}.

Otherwise, $\mu_{K_v}(x_v) = 0$ for all $v$, which implies $(x_v) \in F(J^{(-1)}(\Adeles_K))$ and thus lifts to a point $(y_v) \in X^{(-1)}(\Adeles_K)$. The proof that any non-global such $(x_v)$ is in $ X(\Adeles_{K,\F})$ follows in the same manner as that of \cite{CV19}*{Theorem 1.2}, but we record it here for completeness. Since $\Sha(K, \ker(F)) = 0$, $(x_v)$ does not lift to any nontrivial twist of $(X',\ker F)$. Hence $(y_v) \in X^{(-1)}(\Adeles_K)^{F^\infty}$. Since we can iterate this argument, either we find that $(x_v)$ is global, or $(x_v) \in F^n(X(\Adeles_K))$ for all $n \ge 1$, and hence in $X(\Adeles_{K,\F})$. Thus, we have $X(\Adeles_K)^{F^\infty} \subset X(K) \cup X(\Adeles_{K,\F}) \cup Z$.

For the reverse containment, if $(x_v) \in Z$, then by definition $(x_v) = y + F(z_v)$ for some $y \in J(K)$ and $(z_v) \in F(J^{(-1)}(\Adeles_K))$. Hence the class of $(x_v)$ in $\prod_v H^1(K_v, \ker F)$ is in the image of $H^1(K,\ker F)$ and thus survives $F$-descent. Finally, 
$X(\Adeles_{K,\F}) \subset X(\Adeles_{K,\F})^{F^\infty}$.
\end{proof}

Let us call a point $y \in X(K)$ \defi{horizontal} if the curve $D \to X$ corresponding to $y$ is everywhere tangent to the 
horizontal curves $C \to X, P \mapsto P+P_0, P_0$ fixed. The following estimate is a consequence of the above proof:

\begin{theorem}
\label{thm: bound}
Let $C$ be a smooth curve of genus $g$ and assume that $C$ has no $g^1_2$, $g^1_3$, nor $g^1_4$.
The number of points of $X(K)$ which are neither horizontal nor in the image of Frobenius is bounded by
$(p^r-1)/(p-1)$ where $r$ is the rank of $J(K)$.
\end{theorem}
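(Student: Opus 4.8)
The plan is to count the relevant points through their images under the Frobenius descent map, exploiting the injectivity furnished by the uniqueness property of $U$. Write $r = \rank J(K)$, and recall that a point $y \in X(K)$ lies in the image of Frobenius exactly when $\mu_K(y) = 0$; thus the points to be counted are those $y \in X(K)$ with $\mu_K(y) \neq 0$ that are not horizontal. For such a $y$ let $\gamma_y \colon D \to \PP^{g-1}$ be the map attached to $[\mu_K(y)] \in \PP^{g-1}(K)$. As in the proof of Theorem \ref{thm: global}, I would first identify the horizontal points with those satisfying $\gamma_y(D) = C$: tangency of the curve $D \to X$ to a horizontal curve $P \mapsto P + P_0$ means that the image under $h$ of the tangent direction is one of the two points cut out on the canonical curve, forcing $\gamma_y(D) \subseteq C$. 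Hence a non-horizontal $y$ has $\gamma_y(D) \neq C$, and since $C$ has no $g^1_4$, Lemma \ref{lem: sec} gives $\gamma_y(D) \cap U \neq \emptyset$.

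The resulting injectivity is the conceptual heart. By Lemma \ref{lem: meetU} the composite $\pi \circ h^{-1} \circ \gamma_y$ recovers $y$ from $\gamma_y$, and $\gamma_y$ is determined by the class $[\mu_K(y)] \in \PP^{g-1}(K)$; so the assignment $y \mapsto [\mu_K(y)]$ is injective on the set of non-horizontal points with $\mu_K(y) \neq 0$. It then remains only to bound the number of classes $[\mu_K(y)]$ that can occur.

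Next I would bring in the $\F_p$-linear structure. The map $\mu_K \colon J(K) \to \Omega_{K/\F}^{\oplus g}$ is a homomorphism into a $K$-vector space, hence into a space of characteristic $p$; its image $W := \mu_K(J(K))$ is an $\F_p$-vector space annihilated by $p$, and from $0 \to \ker F \to J^{(-1)} \xrightarrow{F} J \to 0$ the map $\mu_K$ is injective on $J(K)/F(J^{(-1)}(K))$, so $W \cong J(K)/F(J^{(-1)}(K))$. If $y_1 \neq y_2$ are two of our points, then $\mu_K(y_1)$ and $\mu_K(y_2)$ cannot be $\F_p^{\times}$-proportional: an equality $\mu_K(y_1) = c\,\mu_K(y_2)$ with $c \in \F_p^{\times} \subseteq K^{\times}$ would give $[\mu_K(y_1)] = [\mu_K(y_2)]$ and hence $y_1 = y_2$ by injectivity. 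Therefore distinct points span distinct one-dimensional $\F_p$-subspaces of $W$, so their number is at most the number of such subspaces, namely $(p^s-1)/(p-1)$ with $s = \dim_{\F_p} W$.

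Finally I would show $s \le r$, which yields the stated bound. Since the target has characteristic $p$, $\mu_K(pJ(K)) = 0$. Moreover $\mu_K$ kills torsion: as $J$ is constant, $J(K) \cong J(\F) \oplus \Hom_\F(\Jac D, J)$ with the second summand torsion-free, whence $J(K)_{\tors} = J(\F)$; a constant point factors through $\Spec \F$, so by the Artin--Milne formula $\mu_K(y) = y^*(\omega_1,\dots,\omega_g)$ it pulls the holomorphic differentials back to $0$. Hence $W$ is a quotient of $J(K)/\bigl(J(K)_{\tors} + pJ(K)\bigr) \cong \F_p^{\,r}$, giving $s \le r$ and $(p^s-1)/(p-1) \le (p^r-1)/(p-1)$. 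The step I expect to require the most care is making rigorous the identification of the horizontal locus with $\gamma_y(D) = C$ and the clean passage from geometric injectivity to the counting of one-dimensional $\F_p$-subspaces; once the vanishing of $\mu_K$ on $J(\F)$ is in place, the dimension bound $s \le r$ is a short computation.
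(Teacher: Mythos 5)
Your counting machinery is sound and essentially matches the paper's: the injection $y \mapsto [\mu_K(y)]$ on the relevant points, the observation that distinct points give distinct $\F_p$-lines in $W = \mu_K(J(K)) \cong J(K)/F(J^{(-1)}(K))$, and the bound $\dim_{\F_p} W \le r$ via killing torsion (constant points) and $pJ(K)$ are all correct; indeed your justification of $|J(K)/F(J^{(-1)}(K))| \le p^r$ is more detailed than the paper's one-line assertion. However, there is a genuine gap at the geometric dichotomy. You prove ``horizontal $\Rightarrow \gamma_y(D) \subseteq C$'' and then write ``Hence a non-horizontal $y$ has $\gamma_y(D) \neq C$.'' That ``hence'' is the \emph{converse} of what you proved, not its contrapositive: what you need is ``$\gamma_y(D) = C \Rightarrow y$ horizontal (or otherwise accounted for),'' and this is precisely the nontrivial step. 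It is where the paper invokes the hypothesis that $C$ has no $g^1_3$: absence of trisecants forces the tangent direction at each place to land in the support $\{Q_v, Q_v'\}$ of the divisor, rather than at some third point of $C$ on the secant line. Your proposal never uses the $g^1_3$ hypothesis at all, which is a symptom of this missing step.

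Moreover, even with the trisecant argument, $\gamma_y(D) = C$ does \emph{not} simply imply horizontal: the paper's proof isolates the diagonal case $y = 2Q$ with $Q \in C(K)$, which has $\gamma_y(D) = C$ but is not claimed to be horizontal. The paper handles such points by a second recovery argument ($\gamma_y$ factors through $C$, giving $Q$, and then $y = 2Q$), and it must additionally note that the classes $[\mu_K(y)]$ arising this way are distinct from those of points with $\gamma_y(D) \neq C$ (since the class determines $\gamma_y$, hence determines which recovery procedure applies), so that the overall assignment remains injective and the single bound $(p^r-1)/(p-1)$ covers both kinds of points. As written, your argument establishes the bound only for points with $\gamma_y(D) \neq C$; any non-horizontal point with $\gamma_y(D) = C$ (a trisecant configuration if $C$ had a $g^1_3$, or a diagonal point $2Q$) falls outside your injection, so the theorem is not proved. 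The fix is exactly the paper's case analysis: for $\gamma_y(D) = C$, use no $g^1_3$ to place the tangent direction in the support of $y$, conclude $y = Q + Q'$ with $Q' \neq Q$ is horizontal, and recover $y = 2Q$ from its class, checking disjointness of the two families of classes.
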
 

\begin{proof}
The proof of Theorem \ref{thm: global} shows that a point $y$ of $X(K)$ with $\mu(y) \ne 0$ for which the corresponding map 
$\gamma \colon D \to \PP^{g-1}$ is not contained in $C$ can be recovered from $\gamma$. On the other hand,
$\gamma$ depends only on $\mu(y)$ up to scalars in the image of $J(K)/F(J^{(-1)}(K) \setminus \{0\}$. 
Otherwise, again from the proof of Theorem \ref{thm: global}, if the corresponding map 
$\gamma \colon D \to \PP^{g-1}$ is contained in $C$, giving $Q \in C(K)$ then either $y=2Q$ and can also be recovered
from $\mu(y)$ up to scalars or is horizontal. In the former case, we can also uniquely recover the point from the
class in the image of $J(K)/F(J^{(-1)}(K) \setminus \{0\}$ but the classes we obtain are distinct from those of the first part
of the argument. Since 
$J(K)/F(J^{(-1)}(K)$ is a group of order at most $p^r$, the total number of these classes is bounded by $(p^r-1)/(p-1)$.
\end{proof}

We note that there can be infinitely many horizontal points, since if $P, Q \in C(K), P+F^n_q(Q) \in X$ 
is horizontal for all $n \ge 1$, where $F_q: C \to C$ is the $\F_q$-Frobenius. Also, some hypothesis is needed in
both Theorems \ref{thm: global} and \ref{thm: bound}. If $D$ is an elliptic curve and $C \to D$ is a map of degree $2$
(i.e. $C$ is bielliptic and has a $g^1_4$), then the fibers of the map $C \to D$ define a map $D \to X$. This map can be
composed with isogenies of $D$ to give rise to infinitely many maps $D \to X$, that is, points of $X(K)$. These points
are not horizontal. We can also construct adelic points, unobstructed by Frobenius descent, by taking suitable varying local components from this infinite collection of global points. See Example \ref{ex: quartic}.

\subsection{Frobenius descent on arbitrary subvarieties} \label{sec: arbitraryX}

We now consider subvarieties $X$ of constant abelian varieties $A$ with $\dim A \ge 3$, and no longer assume that $X$ is the symmetric square of a curve. We will construct $X$ in such a way that guarantees the existence of non-global adelic points unobstructed by Frobenius descent. As before, we fix a smooth curve $D$ over $\F$, and let $K = \F(D)$. 

\begin{lemma} \label{lem: bertini} Let $C_1$ and $C_2$ be two curves contained in $A$. Then there exists an irreducible divisor $X$ of $A$ containing both $C_1$ and $C_2$.
\end{lemma}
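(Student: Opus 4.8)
The lemma is a Bertini-type existence statement, so the natural approach is to realize $X$ as a general member of a suitable linear system of divisors passing through $Z \defeq C_1 \cup C_2$. First I would fix an ample line bundle $L$ on $A$ and let $\calI_Z$ be the ideal sheaf of the reduced closed subscheme $Z$, which has dimension $\le 1$. By Serre vanishing, for all $n \gg 0$ the sheaf $L^{\otimes n} \otimes \calI_Z$ is globally generated and has many global sections; in particular the linear system $\Lambda \defeq \PP\bigl(H^0(A, L^{\otimes n} \otimes \calI_Z)\bigr)$ is nonempty and base-point-free away from $Z$, and its members are exactly the effective divisors in $|L^{\otimes n}|$ containing $C_1$ and $C_2$.

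The next step is to show that a general member of $\Lambda$ is irreducible. Enlarging $n$ if necessary, $L^{\otimes n} \otimes \calI_Z$ is very ample on $A \setminus Z$, so the associated rational map $\phi \colon A \dashrightarrow \PP^M$ restricts to a locally closed immersion on $A \setminus Z$, and is therefore birational onto its image, which has dimension $\dim A \ge 3$. Since $\dim \phi(A) \ge 2$, the Bertini irreducibility theorem applies: a general hyperplane section of $\phi(A)$ is irreducible, and because $\phi$ is birational away from $Z$ its proper transform is an irreducible divisor on $A$. The point that makes this work is the dimension count $\dim Z \le 1 < \dim A - 1$: although $Z$ is contained in \emph{every} member of $\Lambda$, it can never be a codimension-one component, so for general $s$ the divisor $\divv(s)$ has its unique component equal to this proper transform. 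Hence a general member is genuinely irreducible and contains both $C_1$ and $C_2$.

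The one real subtlety, and the step I expect to be the main obstacle, is that $\F$ is finite, so a priori there need be no \emph{$\F$-rational} member of $\Lambda$ exhibiting the generic (geometrically irreducible) behaviour. The geometrically reducible members form a proper closed subset of the parameter projective space $\Lambda$, but over a finite field such a subset can still contain all rational points. To circumvent this I would appeal to a Bertini irreducibility theorem valid over finite fields: for $n$ sufficiently large the proportion of geometrically irreducible members of $\Lambda$ tends to $1$, so a member defined over $\F$ with the desired property exists. Taking $X$ to be the support of this member then yields an irreducible divisor of $A$ containing $C_1 \cup C_2$. (If one is content to produce $X$ only over $\Fbar$, or after a finite extension of $\F$, the classical infinite-field Bertini argument suffices and this obstacle evaporates.)
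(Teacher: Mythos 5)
Your proposal is correct and takes essentially the same route as the paper: the whole lemma rests on your final step, a finite-field Bertini irreducibility theorem for hypersurface sections containing a prescribed subscheme, which is precisely what the paper invokes (\cite{Gun17}*{Theorem 1.1}, and independently \cite{Wutz}*{Theorem 1.2}), applicable here because $\dim(C_1 \cup C_2) = 1 \le \dim A - 2$. You correctly identified the finiteness of $\F$ as the one genuine obstacle, so the preliminary classical-Bertini discussion, while fine, becomes redundant once that theorem is cited.
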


\begin{proof} Such an $X$ exists since if $V$ is a closed subset of codimension at least $2$ in $A$ containing $C_1$ and $C_2$, then by \cite{Gun17}*{Theorem 1.1} (and independently \cite{Wutz}*{Theorem 1.2}), a finite field analogue of a Bertini-type theorem for hypersurface sections containing a given subscheme, one can find an irreducible divisor of $A$ containing $V$. 
\end{proof}

\begin{prop} \label{prop: F-unobstr} 

Assume there exists points $P_1, Q \in A(K)$ and let $P_2 := P_1 + F(Q)$, where $F$ denotes the Frobenius isogeny on $A$. Let $D_1$ and $D_2$ be the images of the corresponding maps in $\Mor_{\F}(D,A)$. Then there exists a subvariety $X$ of $A$ containing $D_1$ and $D_2$. Moreover, there exist adelic points $(P_v) \in X(\Adeles_K)$ unobstructed by Frobenius descent. 
\end{prop}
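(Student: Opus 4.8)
The plan is to produce the divisor $X$ directly from Lemma \ref{lem: bertini} and then to manufacture a non-global surviving adelic point by \emph{mixing} the two global points $P_1$ and $P_2$ place by place, exploiting that they become indistinguishable once one applies the Frobenius descent map $\mu$.

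First I would construct $X$. The points $P_1, P_2 \in A(K) = \Mor_\F(D,A)$ correspond to morphisms $D \to A$ whose images $D_1, D_2$ each have dimension at most $1$; since $\dim A \ge 3$, the closed set $D_1 \cup D_2$ has codimension at least $2$, so the Bertini-type statement underlying Lemma \ref{lem: bertini} (from \cite{Gun17}, \cite{Wutz}) yields an irreducible divisor $X \subset A$ containing it. As the maps attached to $P_1$ and $P_2$ factor through $X$, we get $P_1, P_2 \in X(K)$; and taking $Q \notin \ker F$ (and, if one wants the examples to avoid $X(\Adeles_{K,\F})$ as well, taking $P_1$ non-constant) ensures $F(Q) \ne 0$, hence $P_1 \ne P_2$.

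The crux is that $P_1$ and $P_2$ have equal image under $\mu$. Since $\mu_K$ is the connecting homomorphism of $0 \to \ker F \to A^{(-1)} \xrightarrow{F} A \to 0$, its kernel is exactly the image of $F \colon A^{(-1)}(K) \to A(K)$; as $P_2 - P_1 = F(Q)$ lies in that image, we obtain $\mu_K(P_1) = \mu_K(P_2) \eqdef \xi \in H^1(K,\ker F)$. I would then fix a partition $D^1 = S_1 \sqcup S_2$ into nonempty sets of places and define $(P_v)$ by letting $P_v$ be the image of $P_1$ in $X(K_v)$ for $v \in S_1$ and the image of $P_2$ in $X(K_v)$ for $v \in S_2$, giving a point of $X(\Adeles_K) = \prod_v X(K_v)$. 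By functoriality of $\mu$ on separable extensions, $\mu_{K_v}(P_v)$ is the localization at $v$ of $\mu_K(P_1)$ or of $\mu_K(P_2)$; but these agree, so the class $(\mu_{K_v}(P_v))_v \in \prod_v H^1(K_v,\ker F)$ is the image of the single global class $\xi$. Hence $(P_v)$ survives the torsor $(X',\ker F)$, i.e., is unobstructed by Frobenius descent.

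The remaining, and genuinely essential, step is non-globality. If $(P_v)$ were the diagonal image of some $P \in X(K)$, then $P$ would localize to $P_1$ at each $v \in S_1$ and to $P_2$ at each $v \in S_2$; since $X(K) \hookrightarrow X(K_v)$ is injective, agreement at a single place already forces $P = P_1$ and $P = P_2$, contradicting $P_1 \ne P_2$. I expect the main point requiring care to be exactly this non-triviality bookkeeping: one must keep both $S_1, S_2$ nonempty and arrange $F(Q) \ne 0$ so that the mixed point differs from every global point. Beyond this, the argument needs no input other than Lemma \ref{lem: bertini} and the functoriality of $\mu$; its entire force is the coincidence $\mu_K(P_1) = \mu_K(P_2)$ engineered by the relation $P_2 = P_1 + F(Q)$. (I would also note that, by contrast, this construction only produces points surviving the single torsor $(X',\ker F)$, since $F(Q)$ need not lie in $F^2(A^{(-2)}(K))$, so one should not expect survival of all Frobenius iterates.)
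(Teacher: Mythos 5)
Your proposal is correct and follows essentially the same route as the paper: apply Lemma \ref{lem: bertini} to obtain $X$, then mix $P_1$ and $P_2$ over a nonempty partition $S_1 \sqcup S_2$ of the places, with survival following because $P_v \equiv P_1 \pmod{F(A^{(-1)}(K_v))}$ at every place (the paper writes this as $P_v = P_1 + F(z_v)$ with $z_v$ locally varying, which is the same as your observation that $\mu_K(P_1)=\mu_K(P_2)$). Your explicit insistence that $F(Q)\neq 0$, so that $P_1 \neq P_2$, makes precise a nondegeneracy hypothesis that the paper leaves implicit in its non-globality claim.
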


\begin{proof} 
We apply Lemma \ref{lem: bertini} to $A$ and the curves $D_1$ and $D_2$ to obtain such an $X$. To exhibit adelic points on $X_K$ unobstructed by Frobenius descent, we separate the places of $K$ into two disjoint nonempty sets $S_1 \sqcup S_2$. Define the adelic point $(P_v) \in X(\Adeles_K)$ by $P_v := P_i$ if $v \in S_i$. Let $z_v \in A(K_v)$ be such that $z_v = 0$ when $v \in S_1$ and $z_v = Q$, defined above, when $v \in S_2$. Then, for each place $v$ we have $P_v = P_1 + F(z_v)$, since if $v \in S_1$ then $P_v = P_1 + F(0)$ and if $v \in S_2$ then $P_v = P_2 = P_1 + F(Q)$. Since both $S_1$ and $S_2$ are nonempty, $(P_v) \in X(\Adeles_K)$ is not a global point. Moreover, by construction, $(P_v)$ is unobstructed by Frobenius descent.
\end{proof}

However, we claim that any such adelic point is obstructed by a finite \'etale torsor.

\begin{prop} The point $(P_v) \in X(\Adeles_K)$ from Proposition~\ref{prop: F-unobstr} is obstructed by a torsor obtained from the pullback of the multiplication-by-$n$ isogeny on $A$ for some integer $n$. 
\end{prop}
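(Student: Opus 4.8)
The plan is to reduce the statement to a question about the abelian variety $A$ and then to detect the obstruction cohomologically. Since the torsor on $X$ is by construction the pullback along $X \hookrightarrow A$ of the multiplication-by-$n$ torsor $A \xrightarrow{[n]} A$, a point of $X(\Adeles_K)$ survives the pulled-back torsor if and only if its image in $A(\Adeles_K)$ survives $[n]$-descent; so it suffices to show that $(P_v)$, viewed in $A(\Adeles_K)$, fails $[n]$-descent for some $n$. Writing $g := F(Q) \in A(K)$ and translating by the global point $P_1$ (which does not affect whether an adelic point survives descent), I reduce to the ``patchwork'' point $(R_v)$ with $R_v = 0$ for $v \in S_1$ and $R_v = g$ for $v \in S_2$. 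Note $g \neq 0$ and, crucially, that $g$ should be taken nonconstant, i.e. $g \notin A(\F)$: otherwise $(R_v)$ would already lie in $A(\Adeles_{K,\F})$ and survive every descent, so this nonconstancy is exactly the hypothesis that makes the statement true, and I assume it.

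First I record the survival condition explicitly. Using the Kummer sequence for $[\ell]$ with $\ell \neq p$ and the map $\mu$, the point $(R_v)$ survives $[\ell]$-descent exactly when the adelic class $c := (\mu_{K_v}(R_v))_v$, which is $0$ on $S_1$ and equals $\res_v \delta(g)$ on $S_2$ (here $\delta(g) \in H^1(K, A[\ell])$ is the \emph{global} Kummer class of the global point $g$), lies in the image of the global localization map $H^1(K, A[\ell]) \to \prod_v H^1(K_v, A[\ell])$. Since $\delta(g)$ is itself global, $c$ is globally localizable if and only if its complementary truncation on $S_1$ is, so I may work with whichever half of the partition is more convenient.

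The heart of the argument is then a Poitou--Tate computation. By the Poitou--Tate exact sequence for the finite group scheme $A[\ell]$ over $K$, together with the Weil-pairing identification $A[\ell]^\vee \cong A^\vee[\ell]$, the class $c$ is globally localizable if and only if it is orthogonal, under the sum of local Tate pairings, to the localizations of all global classes in $H^1(K, A^\vee[\ell])$; that is, if and only if $\sum_{v \in S_2} \langle \res_v \delta(g), \res_v \beta \rangle_v = 0$ for every global $\beta \in H^1(K, A^\vee[\ell])$. By global reciprocity (the sum of local invariants of the global class $\delta(g) \cup \beta \in \Br(K)$ vanishes) the full sum over all $v$ is $0$, so the $S_2$-sum equals the negative of the $S_1$-sum. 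Thus $(R_v)$ is obstructed by $[\ell]$-descent precisely when some global $\beta$ makes this partial sum nonzero, and the task becomes to produce such an $\ell$ and $\beta$.

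To produce them I would use that $g$ is nonzero and nonconstant: since $A(K)$ is finitely generated (Lang--Néron), $g$ is not infinitely $\ell$-divisible, so for a suitable $\ell$ and, via Chebotarev, a suitable place $w$ lying in the relevant half of the partition, the local Kummer class $\res_w \delta(g) \neq 0$; it is the nonconstancy of $g$ that keeps such a place from being forced to cancel against the others. Local Tate duality then gives a local class pairing nontrivially with $\res_w \delta(g)$, which I would lift to a global $\beta \in H^1(K, A^\vee[\ell])$ with controlled behaviour at the remaining places (again by Poitou--Tate over a finite set of places, arranging orthogonality elsewhere) so that the partial sum is nonzero. The main obstacle is exactly this last step: controlling the local pairing contributions so that they do not all cancel, equivalently proving that the nonconstant patchwork $(R_v)$ genuinely escapes $A(K) + A(\Adeles_{K,\F})$ at the level of $[\ell]$-descent. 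This is the \'etale analogue of the Frobenius-descent statement \cite{CV19}*{Theorem 1.2}, and the cleanest route may well be to invoke (or adapt) the description of the finite abelian descent set of a constant abelian variety from \cite{CVV18}, which identifies it with $A(K) + A(\Adeles_{K,\F})$; granting that, the nonconstancy of $g$ immediately places $(R_v)$ outside this set, producing the obstructing $[n]$.
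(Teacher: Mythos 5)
Your reductions are sound as far as they go: survival of the pulled-back torsor on $X$ is indeed equivalent to survival of $[n]$-descent for the image point in $A(\Adeles_K)$, translating by the global point $P_1$ is harmless, and the Poitou--Tate orthogonality criterion for membership in the image of the global localization map is correctly set up. But the proof does not close, and you say so yourself: the ``main obstacle''---producing a prime $\ell$ and a global class $\beta \in H^1(K, A^\vee[\ell])$ whose partial pairing sum over $S_2$ is nonzero---is never carried out. Instead you propose to grant an identification of the finite abelian descent set of a constant abelian variety with $A(K) + A(\Adeles_{K,\F})$, attributed to \cite{CVV18}. No such statement is available there (that paper concerns curves and proves equalities among obstruction sets, not this description), and it is not safe to assume: what is actually proved in this circle is that the \emph{Frobenius} descent set of a constant curve is $C(K) \cup C(\Adeles_{K,\F})$ (\cite{CV19}*{Theorem 1.3}), whereas the full Brauer/abelian descent set can be strictly smaller---\cite{CV19}*{Theorem 1.5} gives $C(\Adeles_K)^{\Br} = C(\F)$ when the genus of $D$ is less than that of $C$, so nonglobal reduced adelic points can perfectly well be obstructed by \'etale abelian descent. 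The one step your argument needs is thus exactly the step left unproved, and the result you lean on to supply it is not a theorem in the cited form.

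This also undercuts the extra hypothesis you insert, namely that $g = F(Q)$ be nonconstant on the grounds that otherwise $(R_v) \in A(\Adeles_{K,\F})$ ``survives every descent'' and the statement would be false: that survival claim is again precisely the unproven point (reduced adelic points are only known to survive Frobenius descent), and the hypothesis is in any case unnecessary. The paper's proof avoids all of the duality machinery: assuming $(P_v)$ survives $[n]$-descent for every $n$, the witnessing global class is taken to be the Kummer class of a point $R \in A(K)$, so $P_v = R + n y_v$ locally; comparing the two halves of the partition, where $P_v$ equals the global points $P_1$ and $P_2$ respectively, forces $P_1 - P_2 = -F(Q)$ to lie in $nA(K)$ for every $n$ coprime to $p$, which is impossible in the finitely generated Mordell--Weil group $A(K)$. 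In particular the conclusion holds for constant $g$ as well (whenever $F(Q)$ is not $p$-primary torsion), so nonconstancy is not ``exactly the hypothesis that makes the statement true.'' Your Poitou--Tate framework is, in the end, a cohomological reformulation of this divisibility obstruction; if you want to complete your route, you would need to prove the non-cancellation statement directly, whereas the finite-generation argument gets there in two lines.
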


\begin{proof} Assume for the sake of contradiction that $(P_v)$ is unobstructed for each integer $n$. Then $(P_v) = R + n(y_v)$ for some $R \in A(K)$ and $(y_v) \in X(\Adeles_K)$. Since $P_v$ is either $P_1$ or $P_2$, both of which are global points in $J(K)$, this equality implies that for each $v$, the point $y_v \in A(K)$ as long as $n$ and $p = \text{char}(K)$ are coprime. This would imply that $P_1 - P_2$ is in the image of the multiplication by $n$ map for each such $n$. But the Mordell-Weil group of $A$ is finitely generated, so this cannot occur. 
\end{proof}

\begin{prop}
Assume that the map $h \colon P \to \PP^{g-1}$ is generically finite. Suppose that there exists $(x_v) \in X(\Adeles_K)$ which survives Frobenius descent and such that $\mu(x_v) \ne 0$ and let $\gamma \colon D \to \PP^{g-1}$ be the map arising from the corresponding global point on $A$. If $\gamma(D)$ is contained in the open set where $h$ is finite and $h^{-1}(\gamma(D))$ is irreducible then $h^{-1}(\gamma(D))$ gives rise to a point in $X(K)\setminus X(\F)$.
\end{prop}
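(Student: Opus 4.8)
The plan is to adapt the argument of Lemma~\ref{lem: meetU} and the $\gamma(D)\ne C$ case of Theorem~\ref{thm: global}, replacing the set-theoretic inverse $h^{-1}$ (which is available only when $h$ is birational) by a genuine lifting of $\gamma$ through the finite map $h$, and supplying the globality of that lift from the constant-curve case \cite{CV19} rather than from a single-valued inverse.

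First I would lift the adelic point as in \S\ref{subsec: gauss}: each $x_v\in X(K_v)$ spreads out to $\Spec(\calO_v)\to X$ and lifts to $\Spec(\calO_v)\to P$, producing $\tilde x_v\in P(K_v)$ with $h(\tilde x_v)=[\mu_{K_v}(x_v)]$. Since $\mu_{K_v}(x_v)=\xi$ is the localization of the single global class corresponding to $\gamma$, we get $h(\tilde x_v)=\gamma_v$, the localization of $\gamma$ at $v$; in particular $h(\tilde x_v)\in\gamma(D)$, so $(\tilde x_v)$ is an adelic point of the curve $\tilde C:=h^{-1}(\gamma(D))$. Because $\gamma(D)$ lies in the locus where $h$ is finite, $\tilde C$ is one-dimensional and $h|_{\tilde C}\colon\tilde C\to\gamma(D)$ is finite and surjective, and by hypothesis $\tilde C$ is irreducible.

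The heart of the matter is to produce a global lift of $\gamma$ through $h|_{\tilde C}$, i.e.\ an $\F$-morphism into $\tilde C$ over $\gamma$; composing with the projection $\pi\colon\tilde C\to X$ then yields the desired element of $\Mor_\F(D,X)=X(K)$. To obtain it I would pass to the normalization $\nu\colon\tilde C^\nu\to\tilde C$, a smooth projective irreducible $\F$-curve, lift $(\tilde x_v)$ to an adelic point $(\hat x_v)\in\tilde C^\nu(\Adeles_K)$, and argue that $(\hat x_v)$ \emph{survives} Frobenius descent on $\tilde C^\nu$ with nonzero Gauss datum. Concretely, fixing a base point embeds $\tilde C^\nu$ in $\Jac(\tilde C^\nu)$, and the composite $\tilde C^\nu\xrightarrow{\nu}\tilde C\xrightarrow{\pi}X\hookrightarrow A$ induces a homomorphism $\Jac(\tilde C^\nu)\to A$ compatible with the $\F_p$-Frobenius isogenies and hence with the maps $\mu$ of \cite{CV19}*{Lemma 3.1}. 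Transporting the global class $\xi$ along this homomorphism shows $(\hat x_v)$ survives $F$-descent, and since $h(\tilde x_v)=\gamma_v\ne 0$ its image under $\mu$ is nonzero. By \cite{CV19}*{Lemma 3.2}, an adelic point of a constant curve nontrivially surviving Frobenius descent is global, so $(\hat x_v)\in\tilde C^\nu(K)$, say $(\hat x_v)=\sigma$ for $\sigma\colon D\to\tilde C^\nu$. Then $h\circ\nu\circ\sigma$ agrees with $\gamma$ at every place, hence equals $\gamma$, and $f:=\pi\circ\nu\circ\sigma\colon D\to X$ is a point of $X(K)$.

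Finally, $f$ is non-constant: a point of $X(\F)\subset X(K)$ is represented by a constant morphism $D\to X$, whose pullback of holomorphic differentials, and hence whose image under $\mu$, vanishes, whereas $\mu(f)=\xi\ne 0$; thus $f\in X(K)\setminus X(\F)$. The hard part will be the middle step: checking that the local lifts assemble into an adelic point that genuinely \emph{survives} Frobenius descent on $\tilde C^\nu$ (merely having local points everywhere would only be a Hasse-principle statement, which is insufficient), handling the lift of $(\tilde x_v)$ through $\nu$ at the finitely many places meeting the singular locus, and confirming that the globality produced by \cite{CV19} descends to $K$ itself rather than to a finite extension. This is precisely where the irreducibility of $h^{-1}(\gamma(D))$ and the functoriality of Frobenius descent under $\Jac(\tilde C^\nu)\to A$ do the work.
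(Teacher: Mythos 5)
There is a genuine gap, and it sits exactly where you flagged ``the hard part'': the transport of the class $\xi$ to $\Jac(\tilde{C}^\nu)$. The functoriality of $\mu$ under the induced homomorphism $\phi\colon \Jac(\tilde{C}^\nu)\to A$ runs the wrong way for your purposes. Writing $g'$ for the genus of $\tilde{C}^\nu$, pullback of differentials along $\phi$ induces a constant-coefficient linear map $L\colon \Omega_{K_v/\F}^{\oplus g'}\to \Omega_{K_v/\F}^{\oplus g}$ with $\mu_{A,K_v}(\phi\circ \hat{x}_v)=L\bigl(\mu_{\Jac,K_v}(\hat{x}_v)\bigr)$. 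Your hypothesis only gives $L(\mu_{\Jac,K_v}(\hat{x}_v))=\xi_v$ for all $v$; to invoke \cite{CV19}*{Lemma 3.2} you would need the classes $\mu_{\Jac,K_v}(\hat{x}_v)$ \emph{themselves} to come from a single global class for the Jacobian of $\tilde{C}^\nu$. Since in general $g'>g$ (the normalization of $h^{-1}(\gamma(D))$ typically has large genus), $L$ has a large kernel, and the local classes may vary within the fibers of $L$ from place to place; nothing forces them to glue. So ``transporting $\xi$'' does not show that $(\hat{x}_v)$ survives $F$-descent on $\tilde{C}^\nu$, and the appeal to the constant-curve theorem is unjustified. (If $L$ were injective the local preimages would be unique and would glue to a global class, but injectivity would require $\phi^*$ to be surjective on holomorphic differentials, forcing $g'\le g$, which is not available here.)

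The paper sidesteps cohomological transport entirely. With $C:=\gamma(D)$ and $C':=h^{-1}(C)\subset P$ irreducible by hypothesis, it forms the fiber product $D':=C'\times_C D$ (via $\gamma$ and $h|_{C'}$), an irreducible curve covering $D$. Your local lifts $\tilde{x}_v\in C'(K_v)$, compatible with $\gamma$ at each $v$, are precisely local sections of $D'\to D$, so (almost) all places of $D$ carry a degree-one point of $D'$ above them; by the Chebotarev density theorem applied to the cover $D'\to D$, this forces $D'\to D$ to be birational. The resulting section $D\to D'$ composed with $D'\to C'\to P\to X$ gives the desired point of $X(K)$, and nonconstancy follows as in your final paragraph since $\mu\neq 0$. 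In short, the Chebotarev argument is exactly the substitute your outline is missing: it converts a compatible family of local lifts through a finite irreducible cover into a global section, without ever requiring the local Frobenius-descent classes on $\Jac(\tilde{C}^\nu)$ to glue.
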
 

\begin{proof}
As in the statement of Lemma \ref{lem: meetU} we have $\mu(x_v) = \mu(y) \ne 0$ for some $y \in J(K)$ and $y$ gives
us a map $D \to \PP^{g-1}$ and we let $C$ denote the image of $D$ in $\PP^{g-1}$ under this map. Under the stated hypotheses, $C' := h^{-1}(C)$ is an irreducible curve in $P$. Their fiber product $D' := C' \times_C D$ is curve which covers $D$ and since $C'$ is irreducible, so too is $D'$. As in the beginning of this section, the existence of the adelic point $(x_v)$ gives us an adelic point $(\tilde{x}_v)$ of $P$ which then lands in $D'$ by our construction. It follows that $D' \to D$ is a cover of curves such that almost all primes split, thus by the Chebotarev density theorem, we must have that $D'$ is birational to $D$. So we have a map $D \to D'$ and we also have maps $D' \to C' \to P \to X$ by construction, which compose to give
a map $D \to X$, hence a point in $X(K)$. 
\end{proof}

\begin{remark} Under the assumptions that $h$ is generically finite, $(x_v) \in X(\Adeles_K)$ survives Frobenius descent, and $\gamma(D)$ is contained in the open set where $h$ is finite, we conclude that the only way $X(K)\setminus X(\F)=\emptyset$ is if $D'$ is reducible. An interesting question is whether the examples provided above are typical.
\end{remark}

\subsection{Degeneracy of the Gauss map} \label{subsec: degen} 

When $X$ is a surface contained in an abelian threefold, we find that $h \colon P \to \PP^2$, thereby mapping the three dimensional projectivized tangent bundle $P$ to the projective plane and hence cannot be finite. The following example illustrates that in this context, there exist non-global adelic points on such a surface $X$ unobstructed by Frobenius descent.

\begin{example} 
\label{ex: quartic} 
Assume $\F$ has odd characteristic. Let $C$ be the quartic curve with affine equation $x^4 + y^4 = 1$, $D$ the curve $x^4 + z^2 = 1$, and let $X$ be the image of $C^{(2)}$ in the Jacobian $J$ of $C$. Consider the map $C \to D$ given by $(x,y) \mapsto (x,y^2)$. By taking the fibers of this map, we obtain a map $f \colon D \to C^{(2)}$. Let $g \colon D \to C^{(2)}$ be the composition of the map $D \to D$ taking $(x,z) \mapsto (-x,z)$ with $f$, and let $P_f$ and $P_g$ represent the points of $C^{(2)}(K)$ corresponding to $f$ and $g$, respectively.

By \cite{CV19}*{Lemma 3.1}, the Frobenius descent map $\mu$ induced on $J$ is given by $P \mapsto P^\ast(\omega_1, \dots, \omega_g)$ where the $\omega_i$ form a basis of holomorphic differentials of $C$. In this case such a basis is given by $\{ \frac{dx}{y^3}, \,  \frac{x\,dx}{y^3}, \, \frac{dx}{y^2} \}$, and we find $\mu(P_f) = (0,0,\frac{2 dx}{z}) = \mu(P_g)$. This is independent of the characteristic, hence the difference of these two points is constant and thus in the image of Frobenius, i.e., $P_f - P_g = F(z)$ for some $z \in J(K)$. 

Following the same construction as in the proof of Proposition \ref{prop: F-unobstr}, we define an adelic point $(P_v)$ of $X$ by splitting the places of $K$ into $S_1 \sqcup S_2$ and alternating between $P_f$ for $v \in S_1$ and $P_g$ for $v \in S_2$. Let $z_v = z$ when $v \in S_1$ and $z_v = 0$ when $v \in S_2$. Then by construction $P_v$ is not global but survives Frobenius descent.   
\end{example}

As previously noted, degeneracies of $h$ arise in positive characteristic that are not present for the usual Gauss map. 
We owe the following example to Dan Abramovich. Take a surface $X_0$ in a large dimensional abelian variety $A_0$, 
such that $A_0$ has $a$-number equal to $\dim A_0$,  with $h$ having three-dimensional image on $X_0$. 
Consider the quotient $A$ of $A_0$ by a general sub-group-scheme of the Frobenius kernel with tangent space $V$ of dimension $\dim A-3$. Then the image $X$ of $X_0$ in $A$ is birational to $X_0$ 
and its corresponding map $h$ has image in $\PP(TA/V)$ which is a projective plane. We have not been able to construct a
failure of the Frobenius descent obstruction in this setting.

\subsection{Descent and the Brauer-Manin obstruction} \label{subsec: descentBM}
At present, there are several difficulties in extending the result that the Brauer Manin obstruction is equivalent to finite abelian descent for curves to arbitrary constant subvarieties of abelian varieties. The proof of \cite{PV10}*{Prop.\ 4.6} uses the fact that $\Br(\Xbar) = 0$ when $X$ is a curve; for higher dimensional $X$ it is possible for transcendental Brauer elements to exist. In the number fields case, Creutz proves that there are no transcendental Brauer-Manin obstructions on abelian varieties \cite{Creutz20}, but it is unknown whether this extends to subvarieties of abelian varieties over a global function field $K$. Additionally, if the Neron-Severi group $\text{NS}(\Xbar) \not \isom \Z$ (which occurs e.g., for $X = C^{(2)}$ as follows from the corresponding classical result for $C^2$) and moreover if $H^1(K, \text{NS}(\Xbar))$ is non-trivial, then there may be additional Brauer obstructions not coming from finite abelian descent.

\section*{Acknowledgements}

The authors are grateful to Dan Abramovich and Brendan Creutz for comments and suggestions. The second author acknowledges support from 
the Marsden Fund Council administered by the Royal Society of New Zealand.

\begin{bibdiv}
\begin{biblist}

\bib{Abr94}{article}{
    AUTHOR = {Abramovich, Dan},
     TITLE = {Subvarieties of semiabelian varieties},
   JOURNAL = {Compositio Math.},
  FJOURNAL = {Compositio Mathematica},
    VOLUME = {90},
      YEAR = {1994},
    NUMBER = {1},
     PAGES = {37--52},
}

\bib{ACGH85}{book}{
 AUTHOR = {Arbarello, E.},
 author =  {Cornalba, M.},
 author = {Griffiths, P. A.},
 author = {Harris, J.},
     TITLE = {Geometry of algebraic curves. {V}ol. {I}},
    SERIES = {Grundlehren der Mathematischen Wissenschaften [Fundamental
              Principles of Mathematical Sciences]},
    VOLUME = {267},
 PUBLISHER = {Springer-Verlag, New York},
      YEAR = {1985},
     PAGES = {xvi+386},
      ISBN = {0-387-90997-4},
}	

\bib{AM}{article}{
author={Artin, Michael},
author={Milne, James S.},
TITLE = {Duality in the flat cohomology of curves},
   JOURNAL = {Invent. Math.},
    VOLUME = {35},
      YEAR = {1976},
     PAGES = {111--129},
     }

\bib{Creutz20}{article}{
    AUTHOR = {Creutz, Brendan},
     TITLE = {There are no transcendental {B}rauer-{M}anin obstructions on
              abelian varieties},
   JOURNAL = {Int. Math. Res. Not. IMRN},
  FJOURNAL = {International Mathematics Research Notices. IMRN},
      YEAR = {2020},
    NUMBER = {9},
     PAGES = {2684--2697},
}

\bib{CV19}{article}{
Author = {Creutz, Brendan},
Author = {Voloch, Jos\'e Felipe},
Title = {The {B}rauer-{M}anin obstruction for constant curves over global function fields},
Year = {2019},
Eprint = {arXiv:1909.10102},
}

\bib{CVV18}{article}{
Author = {Creutz, Brendan},
Author = {Viray, Bianca},
Author = {Voloch, Jos\'e Felipe},
  TITLE = {The {$d$}-primary {B}rauer-{M}anin obstruction for curves},
   JOURNAL = {Res. Number Theory},
    VOLUME = {4},
      YEAR = {2018},
    NUMBER = {2},
     PAGES = {Paper No. 26, 16},
}

\bib{GH94}{book}{
	AUTHOR = {Griffiths, Phillip},
	AUTHOR = {Harris, Joseph},
     TITLE = {Principles of algebraic geometry},
    SERIES = {Wiley Classics Library},
      NOTE = {Reprint of the 1978 original},
 PUBLISHER = {John Wiley \& Sons, Inc., New York},
      YEAR = {1994},
     PAGES = {xiv+813},
      ISBN = {0-471-05059-8},
}

\bib{Gun17}{article}{
 AUTHOR = {Gunther, Joseph},
     TITLE = {Random hypersurfaces and embedding curves in surfaces over finite fields},
   JOURNAL = {J. Pure Appl. Algebra},
  FJOURNAL = {Journal of Pure and Applied Algebra},
    VOLUME = {221},
      YEAR = {2017},
    NUMBER = {1},
     PAGES = {89--97},
      ISSN = {0022-4049},
      }
      
\bib{Kempf}{article}{
   author={Kempf, George},
   title={On the Geometry of a Theorem of Riemann},
   JOURNAL = {Ann. of Math. (2)},
    VOLUME = {98},
      YEAR = {1973},
    NUMBER = {1},
     PAGES = {178--185},
     }
      
\bib{Martens}{article}{
   author={Martens, Henrik H.},
   title={On the varieties of special divisors on a curve},
   journal={J. Reine Angew. Math.},
   volume={227},
   date={1967},
   pages={111--120},
}      

\bib{PV10}{article}{
 AUTHOR = {Poonen, Bjorn},
 AUTHOR = {Voloch, Jos\'{e} Felipe},
     TITLE = {The {B}rauer-{M}anin obstruction for subvarieties of abelian
              varieties over function fields},
   JOURNAL = {Ann. of Math. (2)},
    VOLUME = {171},
      YEAR = {2010},
    NUMBER = {1},
     PAGES = {511--532},
     }
     
     \bib{ARM}{thesis}{
author={Resende de Macedo, A.},
title={Differential fppf descent obstructions},
date={2017},
note={Ph.D. Thesis, University of Texas at Austin},
}

\bib{Sto07}{article}{
AUTHOR = {Stoll, Michael},
     TITLE = {Finite descent obstructions and rational points on curves},
   JOURNAL = {Algebra Number Theory},
    VOLUME = {1},
      YEAR = {2007},
    NUMBER = {4},
     PAGES = {349--391},
}

\bib{Vol}{article}{
   author={Voloch, Jos\'{e} Felipe},
   title={Differential descent obstructions over function fields},
   journal={Proc. Amer. Math. Soc.},
   volume={142},
   date={2014},
   number={10},
   pages={3421--3424},
   }

\bib{Wutz}{article}{
AUTHOR = {Wutz, Franziska},
TITLE = {Bertini theorems for smooth hypersurface sections containing a subscheme over finite fields},
YEAR = {2016},
Eprint = {arXiv:1611.09092}
}

\end{biblist}
\end{bibdiv}

\end{document}